\documentclass[11pt]{article}

\usepackage{amsmath,amssymb,amsthm,fullpage}
\usepackage{hyperref}
\usepackage{graphicx}
\usepackage{booktabs}
\usepackage{float, caption}
\usepackage{color}

\newtheorem{theorem}{Theorem}
\newtheorem{lemma}{Lemma}
\newtheorem{remark}{Remark}
\newtheorem{corollary}{Corollary}

\newtheorem{definition}{Definition}

\title{\textbf{Saddle Networks}:\\Structure-Preserving Architectures for Convex-Concave Functions}
\author{
Xavier Warin\\
EDF Lab Paris-Saclay and FiMe, Laboratoire de Finance des March\'es de l'Energie\\
91120 Palaiseau, France\\
\texttt{xavier.warin@edf.fr}
}

\begin{document}
\maketitle

\begin{abstract}
Saddle-point models arise throughout optimization, optimal transport, robust learning, and control.
In many applications, the relevant function \(f(x,y)\) is convex in \(x\) and concave in \(y\), and preserving this geometry is essential for obtaining tractable min--max formulations and reliable certificates.
We introduce a structured separable decomposition that preserves the convex-concave geometry and prove a complete one-dimensional approximation theorem under a mixed Monge-type convexity condition.
We then describe practical \emph{saddle network} architectures that preserve convexity in \(x\) and concavity in \(y\) by construction. The proposed architectures require only convexity-preserving neural networks, together with simple output transformations enforcing sign and concavity constraints.
Finally, we report numerical benchmarks in dimension \(1\) and  \(5\), showing that the proposed saddle networks achieve high accuracy on smooth, nonsmooth, and high-rank convex--concave test functions.
\end{abstract}

\section{Introduction}

Many decision problems are naturally organized around a function
\[
f(x,y),
\]
which is convex in a decision variable \(x\) and concave in an adversarial,
strategic, or dual variable \(y\). Such functions define saddle-point problems
of the form
\[
\min_x \max_y f(x,y),
\]
and their geometry is essential for well-posedness and algorithmic stability. Convex-concave saddle formulations
appear in robust optimization, minimax learning, game theory, machine learning,
and finance, and have recently motivated dedicated modeling languages such as
disciplined saddle programming \cite{SchieleLuxenbergBoydDSP,JuditskyNemirovski2022}.

The most familiar examples arise from Lagrangian duality, where \(y\) is a
multiplier and the interaction between \(x\) and \(y\) is often affine. However,
many modern applications require richer convex-concave functions that cannot be
reduced to a simple Lagrangian coupling. In zero-sum games and strategic
multi-agent models, \(f(x,y)\) may represent a learned payoff surface, with
\(x\) and \(y\) corresponding to the actions of competing players. In robust and
adversarial learning, \(y\) may represent a perturbation, a stress scenario, or
an adversarial distribution, leading to minimax objectives in which the
convex-concave structure is exploited by first-order saddle-point algorithms
\cite{YangZhangKiyavashHe2020,LinJinJordan2020}. In distributionally robust
learning and related prediction tasks, the maximization variable can encode
uncertainty over distributions rather than a finite-dimensional Lagrange
multiplier, so that the learned coupling between \(x\) and \(y\) may be
nonlinear and problem-specific.

Convex-concave functions also arise in online learning and resource allocation.
The online saddle-point problem generalizes online convex optimization by asking
both players to compete against the saddle value of the accumulated payoff
function. This framework connects to online convex optimization with knapsack
constraints and has applications in dynamic pricing, auctions, and crowdsourcing
\cite{RiveraWangXu2020}. In fairness-aware machine learning, minimax group
fairness can be formulated by maximizing over group weights while minimizing over
model parameters; convex-concave saddle formulations provide a structured way
to balance global performance against worst-group performance
\cite{BotCsetnekSedlmayer2022}. These examples require representing a payoff or
risk function whose interaction term is not necessarily bilinear and whose shape
constraints should be preserved during learning.

Another important source of convex-concave structure is control and differential
games. In two-player zero-sum stochastic control, Hamilton--Jacobi--Isaacs
equations involve min--max or max--min nonlinearities, and the corresponding
Hamiltonians encode the interaction between a controller and an adversary
\cite{KaweckiSmears2022,EvansSouganidis1984}. Learning or approximating such
Hamiltonians with unconstrained neural networks may destroy the saddle geometry,
whereas a convex-concave architecture can preserve the variational structure
needed for stable downstream optimization. This motivates neural surrogates that
are not merely accurate in mean-square error, but also preserve the convexity in
the minimizing variables and concavity in the maximizing variables.

Classical convex analysis provides several structure-preserving tools, including
Moreau-type smoothing, partial conjugacy, and self-dual constructions
\cite{Goebel2008}. These methods are analytically powerful, but they do not
always yield parametric surrogates that are convenient for learning pipelines or
large-scale numerical optimization. Neural architectures offer an alternative:
shape constraints can be imposed directly by construction. Input Convex Neural
Networks (ICNNs) enforce convexity by constraining hidden-to-hidden weights to be
nonnegative and using convex nondecreasing activations \cite{AmosICNNPMLR}.
Such models have been used to combine expressive learning with convex planning,
for example in convex approaches to optimal control and model predictive control
\cite{ChenShiZhang2019}. More recent architectures, such as COMONet, aim to
integrate several shape constraints, including monotonicity, convexity,
concavity, and their combinations, in a unified network design
\cite{COMONet2025}.

Our goal is to develop neural architectures for representing and approximating
convex-concave functions while preserving their saddle geometry by construction.
We focus on the idea that one can build saddle networks from
convexity-preserving primitives. Concavity is obtained by sign changes, while
positivity and negativity constraints are enforced by simple output
transformations or range-control mechanisms. This viewpoint is compatible with
ICNNs \cite{AmosICNNPMLR} and input-convex Kolmogorov--Arnold network variants
such as ICKAN \cite{ICKANArxiv}. 

The contributions of this article are as follows. First, we formulate a
structure-preserving saddle architecture based on products of signed convex and
concave factors together with additive convex and concave marginals. Second, we
prove a complete universal approximation result in dimension one under a mixed convexity condition, relying on a
discrete saddle decomposition for piecewise affine functions.  Finally, we describe
practical neural realizations based on ICNN and ICKAN primitives and
evaluate them on smooth, nonsmooth, and higher-dimensional convex-concave
benchmarks comparing the result obtained with the results obtained by the COMONet.

\section{Theory: Structured Saddle Decomposition}
Throughout the paper, the superscript \(cv\) denotes convexity and \(cc\) denotes concavity. 
The superscripts \(+\) and \(-\) indicate nonnegativity and nonpositivity, respectively. 
For instance, \(g^{cv,+}\) is nonnegative and convex, while \(g^{cc,-}\) is nonpositive and concave.

Let $X\subset\mathbb{R}^d$ and $Y\subset\mathbb{R}^d$ be nonempty compact convex sets. Let $f:X\times Y\to\mathbb{R}$ be continuous and assume:
for every fixed $y\in Y$, the map $x\mapsto f(x,y)$ is convex, and for every fixed $x\in X$, the map $y\mapsto f(x,y)$ is concave.
Our goal is to approximate this function $f$  while preserving its convex-concave structure.
In dimension 1 we can prove the following result:

\begin{theorem}[One-dimensional saddle approximation under the Monge condition]
\label{thm:main}
Let \(X,Y\subset\mathbb R\) be compact intervals. Let
\(f\in C(X\times Y)\) be such that:

\begin{enumerate}
\item for every fixed \(y\in Y\), the map \(x\mapsto f(x,y)\) is convex;

\item for every fixed \(x\in X\), the map \(y\mapsto f(x,y)\) is concave;

\item the mixed Monge condition holds:
\[
\partial_{xx}\bigl(-\partial_{yy}f\bigr)\ge 0
\]
in the sense of distributions on \(X\times Y\).
\end{enumerate}

Then, for every \(\varepsilon>0\), there exist an integer \(N\ge 1\)  such that
\begin{align*}
\sup_{(x,y)\in X\times Y}
\Bigg|
f(x,y)
&- \left(
\sum_{i=1}^N e_i^{cv,+}(x)a_i^{cc,+}(y)
+
G(y) \right)
\Bigg|
<\varepsilon .
\end{align*}
    where
    \[
\begin{array}{lll}
e_i^{cv,+}:X\to\mathbb R^+ \quad \text{convex},&
 a_i^{cc,+}:Y\to\mathbb R^+ \quad \text{concave,}& 
 G:Y\to\mathbb R \quad \text{concave.}
 \end{array}
\]
\end{theorem}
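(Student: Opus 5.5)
The plan is to prove the statement by a direct construction: piecewise-linear interpolation in the variable \(y\) only, keeping the exact \(x\)-dependence. Write \(Y=[y_0,y_M]\) and fix a uniform grid \(y_0<y_1<\dots<y_M\) with step \(h\). Let \(L(x,\cdot)\) be the piecewise-linear interpolant of \(f(x,\cdot)\) at these nodes. Since \(f\) is uniformly continuous on the compact set \(X\times Y\), we have \(\sup|f-L|\le 2\omega(h)\), where \(\omega\) is the modulus of continuity of \(f\). Choosing \(h\) small then gives the error \(\varepsilon\). It remains to show that \(L\) has exactly the product structure of the theorem.

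To this end, split \(L\) into its chord part plus a concave correction:
\[
L(x,y)=f(x,y_0)\,\frac{y_M-y}{y_M-y_0}+f(x,y_M)\,\frac{y-y_0}{y_M-y_0}+\sum_{j=1}^{M-1}c_j(x)\,\tau_j(y).
\]
The coefficients and tents are defined as follows.
\begin{itemize}
\item \(c_j(x)=-\bigl(f(x,y_{j+1})-2f(x,y_j)+f(x,y_{j-1})\bigr)/h\).
\item \(\tau_j\) is the tent function vanishing at \(y_0\) and \(y_M\), affine on \([y_0,y_j]\) and on \([y_j,y_M]\), with peak value \((y_j-y_0)(y_M-y_j)/(y_M-y_0)\) at \(y_j\).
\end{itemize}
This identity follows from the kink expansion \(L=\text{affine}-\sum_j c_j(x)(y-y_j)_+\), together with the fact that \(-(y-y_j)_+\) minus its chord is exactly \(\tau_j\). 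Each \(\tau_j\) is nonnegative and concave. The two chord weights are nonnegative and affine, hence concave. By concavity of \(f(x,\cdot)\), each \(c_j(x)\ge 0\).

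Next I would verify convexity of all \(x\)-factors. The functions \(x\mapsto f(x,y_0)\) and \(x\mapsto f(x,y_M)\) are convex by hypothesis 1. To make them nonnegative, replace them by \(f(x,y_0)+K\) and \(f(x,y_M)+K\) for a constant \(K\ge\max|f|\). Since the two chord weights sum to \(1\), this shift is compensated exactly by the concave (constant) function \(G(y)=-K\). The tent terms are already in the required form \(e_i^{cv,+}(x)a_i^{cc,+}(y)\), provided each \(c_j\) is convex. In that case we take \(N=M+1\) and \(a_i\in\{\text{chord weights},\tau_j\}\).

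The main obstacle is convexity of \(c_j\), i.e. showing that the negative discrete second difference in \(y\) is convex in \(x\). This is where the Monge condition enters, and it is only given distributionally for a merely continuous \(f\). I would first try mollification. Let \(f_\delta=f*\rho_\delta\) on the shrunken rectangle \(X_\delta\times Y_\delta\). Then \(\partial_{xx}(-\partial_{yy}f_\delta)\ge0\) holds pointwise there. Integrating \(-\partial_{yy}f_\delta\) against the nonnegative kernel that represents the centered second difference (a tent of width \(2h\)) gives
\[
\partial_{xx}\bigl[-\bigl(f_\delta(x,y_{j+1})-2f_\delta(x,y_j)+f_\delta(x,y_{j-1})\bigr)\bigr]\ge0
\]
for interior nodes. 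Hence the mollified \(c_j^\delta\) are convex on interior subintervals. Letting \(\delta\to0\), uniform convergence on compacts preserves convexity, and continuity of \(f\) extends convexity to all of \(X\). Nodes on the boundary of \(Y\) need extra care. To avoid them, I would place the grid strictly inside \(Y\), at \(y_0+\eta,\dots,y_M-\eta\). I would then absorb the resulting \(O(\omega(\eta))\) error near the endpoints into \(\varepsilon\) by extending the interpolant affinely (still concave) to \([y_0,y_M]\).
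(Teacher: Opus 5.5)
Your construction is the paper's construction with the $x$-discretization removed. The chord-plus-tent expansion in $y$ is Step~1 of Lemma~\ref{lem:discrete-monge-correct}: your $c_j\tau_j$ equals the paper's $B_s\,a_s^{(m)}$, with the factor $1/h$ moved onto the coefficient. Shifting the two endpoint factors by a constant absorbed into an affine $G$ is the paper's Step~6.

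The paper also samples in $x$. It proves only discrete convexity of $i\mapsto B_{is}$, by pairing $\partial_{xx}(-\partial_{yy}f)$ with $\psi_r\eta_s$, and then interpolates piecewise linearly in $x$. So $f_m$ is the bilinear interpolant of the grid values, with error $\omega_f(\sqrt2/m)$ and all factors piecewise affine.

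You keep the exact $x$-dependence. Your error estimate is therefore purely one-dimensional, and you need neither the discrete lemma nor the bilinear-interpolation argument. The price is proving convexity of the continuous functions $c_j$. Your mollification does this properly, and it is what the paper's pairing tacitly requires, since $\psi_r\eta_s$ is not smooth and its support reaches $\partial(X\times Y)$ when $r$ or $s\in\{1,m-1\}$.

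The step that fails as written is your treatment of the boundary stencils $j=1$ and $j=M-1$. Move the nodes to $y_0'<y_1'<\dots<y_M'$ with $y_0'=y_0+\eta$, $y_M'=y_M-\eta$, and extend affinely. You must still exhibit the product form on all of $Y$, and neither natural way of doing so is justified as stated.
\begin{itemize}
\item If you extend each $y$-factor affinely, the tents and the chord weight $(y-y_0')/(y_M'-y_0')$ become negative on $[y_0,y_0')$. Adding constants to the $a_i$ does not help: it creates terms $-\kappa\,e_i(x)$, which are concave in $x$, and the statement has no $x$-marginal.
\item If you redo the chord/tent decomposition on $[y_0,y_M]$, the kink coefficients are unchanged. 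But the endpoint coefficient becomes the extrapolation $(1+t)f(x,y_0')-t\,f(x,y_1')$, with $t=\eta/(y_1'-y_0')>0$. This is a difference of convex functions and is not covered by hypothesis~1. It is in fact convex, but only because its difference with $f(x,y_0)$ is a positive multiple of minus the divided difference of $f(x,\cdot)$ on $y_0<y_0'<y_1'$. That stencil touches $y_0$, which is exactly what you wanted to avoid.
\end{itemize}

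The fix is simpler than the workaround: keep the original grid and take a limit. For $0<\eta<h$, the stencil of
\[
x\mapsto -\bigl(f(x,y_1+h-\eta)-2f(x,y_1)+f(x,y_1-h+\eta)\bigr)
\]
lies in the open interval $(y_0,y_M)$, so your mollification argument makes it convex. As $\eta\downarrow 0$ it converges pointwise to $h\,c_1(x)$ by continuity of $f$, and pointwise limits of convex functions are convex. The same works for $c_{M-1}$, and the rest of your proof then goes through with no extension.
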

\begin{proof}
    The proof is deferred to the appendix.
\end{proof}
\begin{remark}
The additional mixed convexity condition is necessary in general.
Without it, convex-concave functions need not admit a finite
structured separable decomposition of the above type.
\end{remark}
\begin{remark}
    The proof   relies crucially on the one dimensional structure of each part for the function and does not extend to $d>1$.
\end{remark}
\begin{corollary}
\label{corol:1}
Let as $f$ in theorem \ref{thm:main} but  following the mixed Monge condition
\[
\partial_{xx}\bigl(-\partial_{yy}f\bigr)\le 0
\]
in the sense of distributions on \(X\times Y\),
then  for every \(\varepsilon>0\), there exist an integer \(N\ge 1\)  such that
\begin{align*}
\sup_{(x,y)\in X\times Y}
\Bigg|
f(x,y)
&- \left(
\sum_{i=1}^N e_i^{cv,-}(x)a_i^{cv,+}(y)
+
H(x) \right)
\Bigg|
<\varepsilon .
\end{align*}
    where
    \[
\begin{array}{lll}
e_i^{cv,-}:X\to\mathbb R^-,&
 a_i^{cv,+}:Y\to\mathbb R_+ ,&
 H:X\to\mathbb R  \quad \text{are all convex.}
 \end{array}
\]
\end{corollary}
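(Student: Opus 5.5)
The corollary should follow from Theorem~\ref{thm:main} by a change of roles: exchange the variables and flip the sign so that the hypotheses of the theorem hold, then transform the resulting decomposition back.

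\emph{Construction.} Given \(f\) as in the statement, define \(g:Y\times X\to\mathbb R\) by
\[
g(u,v) := -f(v,u), \qquad u\in Y,\ v\in X .
\]
Then \(g\) is continuous on the product of compact intervals \(Y\times X\).

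\emph{Checking the hypotheses of Theorem~\ref{thm:main} for \(g\).}
For fixed \(v\), the map \(u\mapsto g(u,v)=-f(v,u)\) is the negative of a concave function, hence convex. For fixed \(u\), the map \(v\mapsto -f(v,u)\) is the negative of a convex function, hence concave. For the Monge condition, distributional partial derivatives commute, so
\[
\partial_{uu}\bigl(-\partial_{vv} g\bigr)=\partial_{yy}\partial_{xx} f = -\,\partial_{xx}\bigl(-\partial_{yy} f\bigr)\ge 0 .
\]
This uses the assumed reversed inequality \(\partial_{xx}(-\partial_{yy}f)\le 0\). The pullback of distributions under the coordinate swap \((u,v)\mapsto(v,u)\) is harmless, since it is a linear diffeomorphism of the open rectangles.

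\emph{Applying the theorem.} Fix \(\varepsilon>0\). Theorem~\ref{thm:main} gives \(N\), nonnegative convex functions \(\tilde e_i\) on \(Y\), nonnegative concave functions \(\tilde a_i\) on \(X\), and a concave \(\tilde G\) on \(X\) such that
\[
\sup_{(u,v)\in Y\times X}\Bigl|g(u,v)-\sum_{i=1}^N \tilde e_i(u)\tilde a_i(v)-\tilde G(v)\Bigr|<\varepsilon .
\]

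\emph{Transforming back.} Substitute \(u=y\), \(v=x\) and multiply the expression inside the absolute value by \(-1\). This gives
\[
\sup_{(x,y)\in X\times Y}\Bigl|f(x,y)-\sum_{i=1}^N \bigl(-\tilde a_i(x)\bigr)\tilde e_i(y)-\bigl(-\tilde G(x)\bigr)\Bigr|<\varepsilon .
\]
Now set \(e_i^{cv,-}:=-\tilde a_i\), \(a_i^{cv,+}:=\tilde e_i\) and \(H:=-\tilde G\):
\begin{itemize}
\item \(-\tilde a_i\) is the negative of a nonnegative concave function, so it is convex and nonpositive on \(X\);
\item \(\tilde e_i\) is convex and nonnegative on \(Y\);
\item \(-\tilde G\) is convex on \(X\).
\end{itemize}
This is exactly the claimed form.

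There is no real obstacle. The only point needing care is the Monge-condition sign computation, which relies on the commutation of distributional derivatives and the invariance of the distributional inequality under the coordinate swap.
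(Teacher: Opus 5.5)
Your proof is correct and matches the paper's own argument: the paper likewise applies Theorem~\ref{thm:main} to \(F(y,x):=-f(x,y)\) and then sets \(e_i^{cv,-}:=-\widehat a_i^{cc,+}\), \(a_i^{cv,+}:=\widehat e_i^{cv,+}\), \(H:=-\widehat G\). Your explicit computation \(\partial_{uu}(-\partial_{vv}g)=\partial_{xx}\partial_{yy}f\ge 0\) simply spells out the Monge-condition equivalence, which the paper states without calculation.
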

\begin{proof}
Define \(F(y,x):=-f(x,y)\). Then \(F\) is convex in its first variable \(y\) and concave in its second variable \(x\). Moreover, the assumption
\[
\partial_{xx}\bigl(-\partial_{yy}f\bigr)\le 0
\]
is equivalent to the Monge condition required by Theorem~\ref{thm:main} for \(F\), with the roles of \(x\) and \(y\) interchanged.

Applying Theorem~\ref{thm:main} to \(F\), we obtain
\[
\sup_{(x,y)\in X\times Y}
\left|
-f(x,y)
-
\left(
\sum_{i=1}^N \widehat e_i^{cv,+}(y)\widehat a_i^{cc,+}(x)
+
\widehat G(x)
\right)
\right|
<\varepsilon .
\]
Multiplying by \(-1\), the result follows by setting
\[
e_i^{cv,-}(x):=-\widehat a_i^{cc,+}(x),\qquad
a_i^{cv,+}(y):=\widehat e_i^{cv,+}(y),
\qquad
H(x):=-\widehat G(x).
\]
Indeed, \(-\widehat a_i^{cc,+}\) is convex and nonpositive, while \(H\) is convex.
\end{proof}

 The previous theorem and corollary motivate the following general saddle class :
\begin{definition}
\label{def:saddle}
Let \(X,Y\subset\mathbb R^d\).  
The saddle class of order \(N\) on \(X\times Y\) is the set of functions of the form
\[
f(x,y)
=
\sum_{i=1}^N e_i^{cv,+}(x)a_i^{cc,+}(y)
+
\sum_{i=1}^N e_i^{cv,-}(x)a_i^{cv,+}(y)
+
H(x)+G(y),
\]
where
\[
e_i^{cv,+}:X\to\mathbb R_+
\quad\text{is convex},
\qquad
e_i^{cv,-}:X\to\mathbb R_-
\quad\text{is convex},
\]
\[
a_i^{cc,+}:Y\to\mathbb R_+
\quad\text{is concave},
\qquad
a_i^{cv,+}:Y\to\mathbb R_+
\quad\text{is convex},
\]
and where
\[
H:X\to\mathbb R
\quad\text{is convex},
\qquad
G:Y\to\mathbb R
\quad\text{is concave}.
\]
\end{definition}
Every function in the saddle class is convex in \(x\) and concave in \(y\). Indeed, for fixed \(y\), all coefficients multiplying the convex functions of \(x\) are nonnegative. For fixed \(x\), the first sum is a nonnegative multiple of concave functions of \(y\), while the second sum is a nonpositive multiple of convex functions of \(y\), and is therefore concave in \(y\).

The previous saddle-class definition can be extended giving  a second class of functions:
\begin{definition}
\label{def:saddleGen}
Let \(X,Y\subset\mathbb R^d\).  
The bilinear saddle class of order \(N\) on \(X\times Y\) is the set of functions of the form
\[
f(x,y)=g(x,y)+x^\top B y,
\]
where \(g\) belongs to the saddle class of order \(N\) and \(B\in\mathbb R^{d\times d}\).
\end{definition}
\begin{remark}
On compact sets, the bilinear term \(x^\top B y\) can itself be represented within a saddle class of sufficiently large order, up to affine marginal terms. Indeed, for each pair \((k,\ell)\), choose constants \(\alpha_\ell,\beta_k\) such that
\[
y_\ell+\alpha_\ell\ge 0,\qquad x_k+\beta_k\ge 0
\]
on \(Y\) and \(X\), respectively. If \(B_{k\ell}>0\), then
\[
B_{k\ell}x_k y_\ell
=
B_{k\ell}(x_k+\beta_k)(y_\ell+\alpha_\ell)
-
B_{k\ell}\alpha_\ell x_k
-
B_{k\ell}\beta_k y_\ell
-
B_{k\ell}\alpha_\ell\beta_k.
\]
The product term is of type \(e^{cv,+}a^{cc,+}\), while the remaining terms are affine marginals. If \(B_{k\ell}<0\), an analogous decomposition uses a nonpositive convex factor in \(x\) and a nonnegative convex factor in \(y\). Thus the explicit bilinear term does not enlarge the theoretical class on compact domains, but it provides a useful low-rank inductive bias and improves numerical efficiency.
\end{remark}

\begin{remark}
    The additional bilinear term in the bilinear saddle-class is the term use in \cite{COMONet2025} to model the $x-y$ interaction.
\end{remark}

From the theorem in dimension $d=1$, we can derive a Universal approximation.

\begin{theorem}[Universal approximation for (bilinear) saddle functions]\label{thm:ua-saddle}
Assume the hypotheses of Theorem~\ref{thm:main}, with the mixed convexity condition of the theorem or the one in the corollary \ref{corol:1}. Suppose moreover that the chosen
\emph{convexity-preserving network class} $\mathcal{C}$ is a universal approximator of continuous convex functions on compact subsets of $\mathbb{R}$ in the sup norm.
Then the corresponding (bilinear) saddle-network class defined by (bilinear) saddle-class functions in definition \ref{def:saddle} or \ref{def:saddleGen} where each convex or concave function is an element of $\mathcal C$  is dense in the sup norm in the class of continuous one-dimensional convex-concave functions satisfying the mixed Monge condition.
\end{theorem}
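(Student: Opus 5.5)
The plan is a two-stage approximation: first apply Theorem~\ref{thm:main} (or Corollary~\ref{corol:1}) to reach a finite structured decomposition, then replace each one-dimensional factor by an element of $\mathcal C$ while keeping its shape and sign constraints. Fix $\varepsilon>0$ and $f$ satisfying the hypotheses. In the Monge case of Theorem~\ref{thm:main}, take $N$ and $e_i^{cv,+},a_i^{cc,+},G$ with uniform error below $\varepsilon/2$. In the case of Corollary~\ref{corol:1}, take $e_i^{cv,-},a_i^{cv,+},H$ instead. Each of these decompositions is a saddle-class function in the sense of Definition~\ref{def:saddle}, with the unused terms set to zero; it is bilinear saddle-class with $B=0$. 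It therefore suffices to show that any fixed finite decomposition of this form can be approximated uniformly, to within $\varepsilon/2$, by one whose factors are built from $\mathcal C$.

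Concave factors are handled by sign changes. If $a$ is concave, apply the universal approximation property of $\mathcal C$ to the convex function $-a$ and negate the result. The main obstacle is preserving the sign constraints: approximating a nonnegative convex $e$ by some $\tilde e\in\mathcal C$ does not guarantee $\tilde e\ge 0$. My first attempt is to shift by the tolerance. Choose $\tilde e\in\mathcal C$ with $\|e-\tilde e\|_\infty<\delta$ and set $\hat e=\tilde e+\delta$. Then $\hat e\ge e\ge 0$, $\hat e$ is convex, and $\|e-\hat e\|_\infty<2\delta$. This requires $\mathcal C$ to be closed under adding constants, which holds for ICNN and ICKAN through a free output bias; otherwise one composes with the output transformations mentioned in the introduction, such as $\max(\cdot,0)$, which preserves convexity.

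The other cases follow by symmetric shifts. For a nonnegative concave $a$, take $\hat a=-(\widetilde{(-a)})-\delta$ with the inner approximation chosen one-sidedly, so that the result is concave and lies above $a\ge0$. For a nonpositive convex $e$, take $\tilde e-\delta\le e\le 0$. Alternatively, clip by $\min(\cdot,0)$ applied after a shift; note that $\min$ of a convex function and $0$ is not convex, so shifting, not clipping, is the right tool here. The marginals $G,H$ need no sign control.

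Finally, I control the error of the products. All original factors are continuous on compact intervals, so they are bounded by some $M$. The approximants are then bounded by $M+1$ once $\delta<1/2$. The elementary estimate
\[
|e a-\hat e\hat a|\le |e-\hat e|\,|a|+|\hat e|\,|a-\hat a|\le 2\delta M+2\delta(M+1)
\]
bounds the total error over the $N$ products and the marginal by $C(N,M)\delta$. Choosing $\delta$ with $C(N,M)\delta<\varepsilon/2$ gives a saddle network within $\varepsilon$ of $f$. This network is convex-concave by the observation following Definition~\ref{def:saddle}, and density follows.
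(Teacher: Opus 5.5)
Your argument is correct and follows the route the paper intends. The paper writes no separate proof: it simply derives the result from Theorem~\ref{thm:main} and Corollary~\ref{corol:1}, enforcing concavity and signs with the same negation and shift devices ($\mathrm{ReLU}(g)$, $C-g$, $g-C$) used in Section~\ref{sec:arch}.

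One small slip concerns the nonnegative concave factor, where the shift must go the other way, $\hat a=\delta-\widetilde{(-a)}$: as written, $-\widetilde{(-a)}-\delta$ lies strictly below $a$, so it is negative wherever $a$ vanishes (e.g.\ at the endpoints for $1-y$, $y$, $a_s^{(m)}$). More simply, you can apply universality to the shifted convex targets $e+2\delta$, $-a-2\delta$, $e-2\delta$. This gives the strict sign margins directly and removes any need for $\mathcal C$ to be closed under adding constants.
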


\begin{corollary}[Concrete convex network families]\label{cor:families}
Theorem~\ref{thm:ua-saddle} applies when $\mathcal{C}$ is instantiated by:
\begin{itemize}
\item ICNNs \cite{AmosICNNPMLR} (also used in convex optimal control \cite{ChenShiZhang2019});
\item GroupMax convex networks \cite{WarinGroupMaxArxiv};
\item P1-ICKAN (input-convex KAN, piecewise-linear-by-parts), which provides a universal approximation theorem in \cite{ICKANArxiv}.
\end{itemize}
For cubic-ICKAN, \cite{ICKANArxiv} reports strong numerical evidence; we treat it as an empirical alternative rather than a fully proved universal approximator.
\end{corollary}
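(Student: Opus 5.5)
The corollary follows once we check that each listed family $\mathcal C$ meets the single hypothesis of Theorem~\ref{thm:ua-saddle}: $\mathcal C$ is dense, in the sup norm, in the continuous convex functions on compact subsets of $\mathbb R$. The remaining ingredients (nonnegative, nonpositive, concave factors and the marginals $H,G$) must be realizable from $\mathcal C$ by the output transformations allowed in the saddle-network construction. So the plan has two parts: invoke the cited universality results, and verify that sign and concavity constraints can be imposed without losing density.

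\textbf{Step 1 (density of $\mathcal C$).}
\begin{itemize}
\item \emph{ICNNs.} Any continuous convex $h$ on a compact interval $K$ is a uniform limit of finite maxima of affine functions, i.e. of tangent or secant lines at a fine grid. A one-hidden-layer ICNN with ReLU activations, or a max-type output, represents $\max_j(a_jx+b_j)$ exactly. For example $\max(u,v)=v+\mathrm{ReLU}(u-v)$, iterated through layers with nonnegative hidden weights. Hence ICNNs are dense, as in \cite{AmosICNNPMLR}.
\item \emph{GroupMax networks.} These realize maxima of affine pieces directly, and \cite{WarinGroupMaxArxiv} proves the same density.
\item \emph{P1-ICKAN.} Density is exactly the universal approximation theorem of \cite{ICKANArxiv}.
\end{itemize}

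\textbf{Step 2 (derived constraints).}
\begin{itemize}
\item \emph{Concave factors.} A concave $a$ on $Y$ is approximated by $-c$, where $c\in\mathcal C$ approximates the convex function $-a$ within $\delta$.
\item \emph{Nonnegative factors.} When the target $e\ge 0$ is convex, take $c\in\mathcal C$ with $\|c-e\|_\infty<\delta$ and use $\max(c,0)$. This function is convex, nonnegative, and still within $\delta$ of $e$, since $|\max(c,0)-e|\le|c-e|$ whenever $e\ge0$. Alternatively, use $c+\delta$ or a softplus-type range control.
\item \emph{Nonnegative concave factors $a^{cc,+}$.} Here $\max$ would destroy concavity. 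Instead, approximate $-a$ by $c$ within $\delta$ and set $\tilde a=-c+\delta$, which is concave and satisfies $\tilde a\ge a\ge 0$. This additive shift is the output transformation I would rely on.
\item \emph{Nonpositive convex factors $e^{cv,-}$.} Use $c-\delta$, which is convex and nonpositive.
\end{itemize}
Each transformed factor is therefore within $2\delta$ of its target while having exactly the required sign and curvature.

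\textbf{Step 3 (error propagation).} Fix $\varepsilon$. Use Theorem~\ref{thm:main}, or Corollary~\ref{corol:1}, to obtain an exact structured sum within $\varepsilon/2$ of $f$. Each factor is bounded by some constant $M$ on the compact sets, so replacing every factor within $\eta$ changes each product by at most $\eta(2M+\eta)$. Choosing $\eta$ small relative to $\varepsilon/(N M)$ keeps the total error below $\varepsilon$. Any bilinear term is handled either by setting $B=0$ or via the preceding remark.

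\textbf{Main obstacle.} The delicate point is Step 2 for the concave nonnegative factors: clipping is not allowed, so nonnegativity must come from a shift, and the shift must be absorbed into the error budget. The cubic-ICKAN case is excluded simply because Step 1 lacks a proof for it.
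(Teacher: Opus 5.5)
Your proposal is correct. Step~1 is all the paper itself offers for this corollary: no separate argument is given, and the corollary simply feeds the cited universality results for ICNN, GroupMax and P1-ICKAN into the single hypothesis of Theorem~\ref{thm:ua-saddle}. Cubic-ICKAN is excluded for exactly the reason you give.

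Your Steps~2--3 go further and in effect prove Theorem~\ref{thm:ua-saddle} itself, which the paper asserts without proof. That argument is sound. ReLU clipping for $e^{cv,+}$ and the shifts $-c+\delta$ and $c-\delta$ for $a^{cc,+}$ and $e^{cv,-}$ are exactly the output transforms of Section~\ref{sec:arch}. The per-product bound $\eta(2M+\eta)$ is right, provided $N$ and $M$ are fixed by the decomposition before $\eta$ is chosen, as you do. What your version buys is an explicit check that the sign and curvature constraints cost nothing in the sup-norm error, which the paper leaves implicit.

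One point should be stated explicitly. The density in Step~1 is density among \emph{continuous} convex functions, while Theorem~\ref{thm:main} does not say its factors are continuous. You should therefore invoke the theorem through the appendix construction, whose factors are piecewise affine. Alternatively, replace the endpoint values of each factor by their one-sided limits. This keeps convexity, concavity and sign, changes the sum only on the boundary, and by continuity of $f$ preserves the error bound.
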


\section{Saddle Network Architectures}\label{sec:arch}
We have shown that convex concave functions with a mixed convexity condition could be approximated in dimension 1  using the  decomposition  in definition \ref{def:saddle}  or \ref{def:saddleGen} using network preserving convexity for which a universal theorem is proved. However, we have not yet explained how  to ensure that the corresponding neural approximation remains convex-concave.
In the next sections we develop an architecture for compact set in $\mathbb R^d \times \mathbb R^d$ based on the previous results and preserving the initial convex-concave structure of function to approximate.
\subsection*{Architecture: two multi-output convex networks + post-processing}
We implement  the decompositions of Theorem~\ref{thm:ua-saddle} using two multi-output convexity-preserving networks:
\[
u(x)\in\mathbb{R}^{2N+1},\qquad v(y)\in\mathbb{R}^{2N+1}.
\]
The first $2N$ coordinates are convex outputs, split into two blocks of length $N$,
\[
u(x)=(u^{(1)}(x),u^{(2)}(x),H(x)),
\quad
v(y)=(v^{(1)}(y),v^{(2)}(y),G(y)),
\]
with the identification
\[
u^{(1)}\leftrightarrow e^{cv,+},\quad u^{(2)}\leftrightarrow e^{cv,-}, \quad
v^{(1)}\leftrightarrow -a^{cc,+},\quad v^{(2)}\leftrightarrow a^{cv,+}.
\]
All $u^{(k)}$ and $v^{(k)}$ are convex  functions by construction and we explain below how to enforce the sign constraints.
Adding the bilinear term \(x^\top B y\) yields an element of the bilinear saddle class.

\subsection*{Convex primitives: ICNN  and ICKAN}
\paragraph{ICNN (Amos--Xu--Kolter).}
ICNNs guarantee convexity by constraining hidden-to-hidden weights to be nonnegative and using convex nondecreasing activations \cite{AmosICNNPMLR}. This provides a robust baseline used in applications such as convex control \cite{ChenShiZhang2019}.
 We enforce concavity and sign constraints componentwise from a convex function $g$ with simple transforms:
\[
\text{convex }\ge 0:\ \mathrm{ReLU}(g),\qquad
\text{concave }\ge 0:\ C-g,\qquad
\text{convex }\le 0:\ g-C,
\]
where $C$ is a trainable scalar per component and violations are penalized by $\mathrm{ReLU}(g-C)$ (soft constraint).

This design emphasizes that \emph{only convexity-preserving networks are needed}: concavity is obtained by negation, and positivity/negativity is handled by ReLU or shifts.

\paragraph{ICKAN (input-convex KAN): no penalty via known range.}
ICKAN replaces parts of the network by compositions of learnable univariate  maps and is input-convex by construction \cite{ICKANArxiv}. A key practical feature is that each univariate approximation is defined on a known grid domain, and the architecture gives the \emph{image} of the grid domain. In 1D, if a layer output is guaranteed to satisfy $V\in[I_{\min},I_{\max}]$, then introducing 2 trainable variables $c$ and $d$  define
\[
V-I_{\min} + c^+\ge 0,\qquad V-I_{\max} -d^+\le 0,
\]
so that positivity/negativity constraints can be implemented \emph{without penalties} using fixed shifts based on $(I_{\min},I_{\max})$.
In higher dimension, the same idea applies coordinatewise to each scalar channel whose range is known.
This range-known mechanism (including truncation) is discussed in \cite{ICKANArxiv} and avoids the penalty terms used in the ICNN instantiations.

\section{Numerical Experiments}\label{sec:numerics}

We benchmark saddle-network architectures on a suite of convex-concave test functions.
Each test is designed to target a specific modeling difficulty (smooth coupling, steep curvature, nonsmooth corners,
 intricate $x-y$ interactions).
Throughout, each test satisfies: for every fixed $y$, $x\mapsto f(x,y)$ is convex; for every fixed $x$, $y\mapsto f(x,y)$ is concave.
We report the mean and standard deviation of the final approximation error (MSE) over $10$ independent runs.
We use PyTorch to minimize the MSE between the target function and the saddle network.
The stochastic gradient method is implemented with the Adam optimizer and a learning rate equal to \(10^{-3}\).
The batch size during training is \(2048\), and all timings are reported for an NVIDIA H100 GPU.

\subsection{1D test suite }
We test, for \(d=1\), the accuracy of the approximation for convex-concave functions with different types of features.
Unless otherwise stated, the domain is \(x,y\in[-1,1]\).
The MSE is evaluated on a grid of \(200 \times 200\) points.

\medskip
We first consider test cases with bilinear interactions :
\paragraph{Smooth saddle with bilinear coupling (baseline).}
\begin{equation}\label{eq:case1_1d}
f_1(x,y)=x^2-y^2+xy.
\end{equation}

\paragraph{Steep smooth curvature (conditioning).}
\begin{equation}\label{eq:case2_1d}
f_2(x,y)=e^x-e^y+xy.
\end{equation}

\paragraph{Smooth multi-regime transition (softplus).}
Domain: $x,y\in[-3,3]$. 
\begin{equation}\label{eq:case3_1d}
f_3(x,y)=\mathrm{softplus}(x)-\mathrm{softplus}(y)+xy,
\qquad \mathrm{softplus}(t)=\log(1+e^t).
\end{equation}

\paragraph{Nonsmooth corners (absolute value).}
\begin{equation}\label{eq:case4_1d}
f_4(x,y)=|x|-|y|+xy.
\end{equation}

\paragraph{Competing convex regimes in $x$ and kink in $y$ (max + $\ell_1$).}
\begin{equation}\label{eq:case5_1d}
f_5(x,y)=\max\{|x|,x^2\}+xy-|y|.
\end{equation}

\paragraph{Hybrid smooth/nonsmooth (Huber in $x$, kink in $y$).}
\begin{equation}\label{eq:case8_1d}
f_6(x,y)=\rho_\delta(x)+xy-|y|,
\qquad
\rho_\delta(x)=
\begin{cases}
\dfrac{x^2}{2\delta}, & |x|\le\delta,\\[3pt]
|x|-\dfrac{\delta}{2}, & |x|>\delta,
\end{cases}
\end{equation}
with $\delta=0.3$ in the implementation. 
\medskip

We then add three cases with nonlinear, non-bilinear interactions. The coefficients \(\{u_r,v_r,b_r,t_r\}\) are fixed deterministic parameters, and we set \(\varepsilon=10^{-2}\).
\paragraph{High-rank smooth interaction (softplus coupling).}
\begin{equation}\label{eq:case9_1d}
f_7(x,y)=x^2-y^2+xy
+\sum_{r=1}^{R}
\mathrm{softplus}(k\,u_r x + b_r)
\Bigl(C_r-\mathrm{softplus}(k\,v_r y + t_r)\Bigr),
\end{equation}
where $R=24$, $k=6$.
The constants $C_r$ are defined as
\[
C_r = \mathrm{softplus}\bigl(k(|v_r|+|t_r|)\bigr) + \varepsilon,
\]
ensuring that each term $C_r - \mathrm{softplus}(\cdot)$ is nonnegative.

\paragraph{Sharp nonlinear interaction (exponential coupling).}
\begin{equation}\label{eq:case10_1d}
f_8(x,y)=x^2-y^2+xy
+\sum_{r=1}^{R}
e^{k(u_r x + b_r)}
\Bigl(C_r-e^{k(v_r y + t_r)}\Bigr),
\end{equation}
where $R=16$, $k=2$, and
\[
C_r = \exp\bigl(k(|v_r|+|t_r|)\bigr) + \varepsilon.
\]

\paragraph{Polynomial high-rank interaction.}
\begin{equation}\label{eq:case11_1d}
f_9(x,y)=x^2-y^2+xy
+  \sum_{r=1}^{R}
(u_r x + b_r)^2
\Bigl(C_r-(v_r y + t_r)^2\Bigr),
\end{equation}
where $R=24$ and
\[
C_r = (|v_r|+|t_r|)^2 + \varepsilon.
\]
Each interaction term in the last three cases is constructed as the product of a convex nonnegative function of $x$ and a concave nonnegative function of $y$. The constants $C_r$ are chosen so that the concave factors remain nonnegative over the domain, ensuring that the overall function is convex in $x$ and concave in $y$.

All runs are performed using \(250000\) gradient iterations.
The different ICNNs used in our algorithm and in COMONet use \(3\) hidden layers with \(32\) neurons each.
The P1-ICKAN and Cubic-ICKAN models use \(2\) hidden layers with \(10\) neurons and \(10\) mesh intervals.
The computing time for the saddle network with ICNN is \(550\) seconds, compared with \(770\) seconds for P1-ICKAN and \(1040\) seconds for Cubic-ICKAN.
The computing time for COMONet is \(350\) seconds.

The penalty used for ICNN is equal to \(10\), and in all cases the final penalty is numerically zero, meaning that the obtained solution satisfies the required convexity and concavity constraints.
The results in Table~\ref{tab:1DICNN} show that using \(N=20\) is sufficient to obtain very accurate convergence of the  saddle network. COMONet is less accurate on this test suite, especially on nonsmooth and
high-rank nonlinear interaction cases.
When comparing the different convex-network primitives within the saddle architecture, the ICNN  appears to be the most attractive option in this experiment.
The results in Table~\ref{tab:1DICNNGen} given only with the ICNN show that adding the bilinear term may slightly improve the results.

\begin{table}[H]
    \centering
\begin{tabular}{r|cc|cc|cc|cc}
\hline
& \multicolumn{2}{c}{Saddle ICNN} &\multicolumn{2}{c}{Saddle P1-ICKAN} &\multicolumn{2}{c}{Saddle Cubic-ICKAN} &\multicolumn{2}{c}{COMONet} \\  \hline
case & mean\_mse & std\_mse &  mean\_mse & std\_mse  & mean\_mse & std\_mse & mean\_mse & std\_mse   \\
\hline
1 & 4.77e-6 & 2.53e-6 & 5.74e-06 & 8.80e-06 &2.47e-06 & 4.85e-06 &8.84e-3 & 1.05e-2\\
2 & 3.18e-6 & 1.94e-6  &  7.85e-06 & 1.15e-05 & 4.03e-06 & 6.31e-06 &6.73e-3 & 1.54e-2\\
3 & 4.56e-5 & 3.13e-5 &   1.11e-04 & 2.06e-04& 1.08e-04 & 2.51e-04& 6.04e-2 & 9.72e-2\\
4 &  1.08e-6 & 2.73e-6&  5.86e-06 & 1.07e-05 & 1.63e-06 & 1.53e-06&4.68e-2 & 9.01e-2\\
5 &  5.41e-7 & 1.03e-6 & 2.14e-05 & 6.27e-05 & 5.78e-06 & 1.04e-05&3.01e-1 & 7.04e-1 \\
6 & 1.71e-6 & 2.90e-6 & 5.06e-06 & 7.07e-06& 3.44e-06 & 4.34e-06 &2.02e-3 & 2.22e-3\\
7 &  7.37e-3 & 7.44e-3  & 6.62e-03 & 8.78e-03  & 2.56e-02 & 5.18e-02 &3.64e+0 & 1.18e-1\\
8 &  8.32e-4 & 2.73e-4 & 9.23e-03 & 1.62e-02  & 2.92e-03 & 5.27e-03  & 1.15e+0 & 2.83e-2 \\
9 &  6.56e-5 & 9.01e-5 & 2.11e-05 & 3.73e-05 & 1.34e-04 & 3.93e-04& 9.88e-2 & 1.54e-2\\
\hline
    \end{tabular}
    \caption{Convergence of the saddle networks and COMONet in dimension \(1\), using \(N=20\). High accuracy is observed for all saddle-network variants on the considered admissible test cases.
    \label{tab:1DICNN}}
\end{table}

\begin{remark}
The relatively larger errors observed in cases 7 and 8 are mainly due to the larger range of the corresponding target functions.
\end{remark}
\begin{table}[H]
    \centering
\begin{tabular}{rccccccccc}
\hline
case & 1 & 2 & 3 & 4 & 5 &6 &7 & 8 & 9 \\ \hline
mean\_mse & 1.9e-6 & 1.3e-6& 2.9e-6& 3.7e-6& 1.4e-7& 4.1e-6 & 2.66e-3 &1.6e-3&  4.1e-5\\
std\_mse & 1.3e-6 & 1.8e-6 & 1.4e-6& 7.5e-6 & 2.3e-7& 7.2e-6& 1.10e-3 &2.0e-3& 5.3e-5\\
\hline
    \end{tabular}
    \caption{Convergence of the  bilinear saddle network  in dimension \(1\), using \(N=20\) :  results are globally slightly improved.
    \label{tab:1DICNNGen}}
    
\end{table}

\subsection{Higher-dimensional test suite }
We generalize the previous tests to dimension \(d=5\).
Unless otherwise stated, \(x,y\in[-1,1]^5\).
The MSE is estimated by Monte Carlo using \(500000\) samples. Unless otherwise specified, all matrices used in the test cases are fixed across runs.

\medskip
We first define test cases with bilinear interactions::
\begin{itemize}
    \item \begin{equation}\label{eq:case1_nd}
f_1(x,y)=\|x\|_2^2-\|y\|_2^2 + x^\top B y,
\end{equation}

\item \begin{equation}\label{eq:case2_nd}
f_2(x,y)=\sum_{i=1}^d e^{x_i}-\sum_{i=1}^d e^{y_i}+x^\top B y.
\end{equation}
\item 
\begin{equation}\label{eq:case3_nd}
f_3(x,y)=\log\!\Big(\sum_{k=1}^{m} e^{(Ax)_k}\Big)
-\log\!\Big(\sum_{k=1}^{m} e^{(Cy)_k}\Big)
+x^\top B y,
\end{equation}
where $A,C\in\mathbb{R}^{m\times d}$ (with $m=8$ in the code) are fixed per run.
\item 
\begin{equation}\label{eq:case7_nd}
f_4(x,y)=\sum_{i=1}^{d}\rho_\delta(x_i) + x^\top B y - \|y\|_1,
\end{equation}
with $\delta=0.3$.
\item  for $ x, y \in [0.1,0.9]^5$,
\begin{equation}\label{eq:case8_nd}
f_5(x,y)= -\sum_{i=1}^{d}\log(x_i) + x^\top B y + \sum_{i=1}^{d}\log(y_i).
\end{equation}
\end{itemize}
Then we add 3 cases with more complex interactions where $u_r,v_r\in\mathbb{R}^d$ are fixed deterministic vectors (e.g., sinusoidal patterns scaled), $R=4$  and  $\varepsilon=10^{-2}$. In these cases, the different sums involves the product of positive convex functions by positive concave functions so that the convex-concave structure is preserved. These cases are very difficult.
\begin{itemize}
\item \textbf{High-rank smooth interaction (softplus coupling).}
\begin{equation}\label{eq:case9_nd}
f_6(x,y)=\|x\|_2^2-\|y\|_2^2 + x^\top B y
+\sum_{r=1}^{R}
\mathrm{softplus}(u_r^\top x + b_r)
\Bigl(C_r-\mathrm{softplus}(v_r^\top y + t_r)\Bigr).
\end{equation}
The constants are defined as
\[
C_r = \mathrm{softplus}\!\bigl(\|v_r\|_1 + |t_r|\bigr)+ \varepsilon.
\]

\item \textbf{Sharp nonlinear interaction (exponential coupling).}
\begin{equation}\label{eq:case10_nd}
f_7(x,y)=\|x\|_2^2-\|y\|_2^2 + x^\top B y
+\sum_{r=1}^{R}
\exp(u_r^\top x + b_r)
\Bigl(C_r-\exp(v_r^\top y + t_r)\Bigr),
\end{equation}
where
\[
C_r = \exp\!\bigl(\|v_r\|_1 + |t_r|\bigr)+ \varepsilon.
\]

\item \textbf{Polynomial high-rank interaction (quadratic coupling).}
\begin{equation}\label{eq:case11_nd}
f_8(x,y)=\|x\|_2^2-\|y\|_2^2 + x^\top B y
+\sum_{r=1}^{R}
(u_r^\top x + b_r)^2
\Bigl(C_r-(v_r^\top y + t_r)^2\Bigr),
\end{equation}
where 
\[
C_r = \bigl(\|v_r\|_1 + |t_r|\bigr)^2+ \varepsilon.
\]

\end{itemize}

For Table~\ref{tab:5DICNN}, we keep the same hyperparameters as in the one-dimensional case, except for the penalty scaling factor, which is now set to \(100\) in order to obtain a vanishing penalty term at convergence.
COMONet keeps the full interaction matrix.
The computing time for the saddle network with ICNN is now \(1280\) seconds, while it increases to \(4600\) seconds for P1-ICKAN and \(7200\) seconds for Cubic-ICKAN.
The computing time for COMONet is \(750\) seconds.

\begin{table}[H]
    \centering
\begin{tabular}{r|cc|cc|cc|cc}
\hline
& \multicolumn{2}{c}{Saddle ICNN} &\multicolumn{2}{c}{Saddle P1-ICKAN} &\multicolumn{2}{c}{Saddle Cubic-ICKAN} &\multicolumn{2}{c}{COMONet} \\  \hline
case & mean\_mse & std\_mse &  mean\_mse & std\_mse  & mean\_mse & std\_mse & mean\_mse & std\_mse   \\
\hline  
1 & 7.50e-4 & 9.83e-5 &  1.12e-3 & 9.99e-4 & 3.44e-4 & 2.86e-4  & 5.69e-4 & 2.41e-4\\
2 & 4.32e-4 & 1.12e-4& 9.08e-4 & 7.96e-4  & 3.04e-4 & 2.37e-4 & 3.47e-4 & 1.05e-4 \\
3 & 1.49e-4 & 3.35e-5& 1.83e-3 & 4.27e-3 &  3.95e-4 & 2.31e-4  & 7.55e-4 & 1.61e-3  \\
4 & 6.47e-5 & 3.04e-5&  3.03e-4 & 2.03e-4 &  2.92e-4 & 2.25e-4 &  2.69e-4 & 1.80e-4\\
5 &  6.51e-4 & 2.33e-4 & 1.54e-4 & 8.80e-5 & 5.45e-5 & 2.73e-5 & 2.59e-2 & 4.39e-2\\ 
6& 9.27e-4 & 3.25e-4&6.91e-4 & 1.27e-4&5.00e-4 & 2.45e-4 &  1.23e-3 & 4.43e-4\\ 
7& 3.79e-3 & 5.54e-4 &4.97e-3 & 1.22e-3&2.53e-3 & 9.23e-4 &2.43e-2 & 6.59e-4 \\ 
8& 4.27e-3 & 4.95e-4&8.12e-3 & 1.92e-3& 4.81e-3 & 2.25e-3 & 4.01e-2 & 1.03e-3\\ \hline
    \end{tabular}
    \caption{Convergence of the saddle networks and COMONet in dimension \(5\), using \(N=20\).}
    \label{tab:5DICNN}
\end{table}
The results obtained in dimension \(1\) are broadly confirmed but we observe that on  simpler case 1 and 2, and in the more complex case 6, the COMONet architecture can compete with the saddle architectures.
Results in table \ref{tab:5DICNNGen}, shows that bilinear saddle network allow us to obtain  a modest improvement in accuracy.
\begin{table}[H]
    \centering
\begin{tabular}{rcccccccc}
\hline
case & 1 & 2 & 3 & 4 & 5 &6 &7 & 8  \\ \hline
mean\_mse &8.13e-4&  2.26e-4 & 3.16e-5 & 3.84e-5 &7.03e-4& 9.96e-4 & 3.34e-3 &3.78e-3\\
std\_mse &1.68e-4&  6.68e-5& 7.89e-6& 1.70e-5&2.62e-4&1.17e-4 & 6.43e-4 & 5.87e-4\\
\hline
    \end{tabular}
    \caption{Convergence of the bilinear saddle network in dimension \(5\), using \(N=20\) and ICNN primitives. The results are globally slightly improved.
    \label{tab:5DICNNGen}}
\end{table}

We now study the effect of the parameter \(N\) on the convergence of the saddle network.
We keep only the ICNN architecture for these runs.
Since we are interested in isolating the effect of \(N\) on convergence, preliminary experiments indicate that increasing the number of neurons to \(64\) allows us to obtain slightly more accurate results.
The other hyperparameters are unchanged, except for the number of gradient iterations, which is now set to \(10^6\).

We consider cases \(1\), \(3\),  \(5\) ,\(6\) , \(7\) and \(8\) and plot in Figure~\ref{fig:conVN} the logarithm of the averaged MSE over \(10\) runs as a function of \(\log(N)\) for the saddle network class, with \(N \in \{1,2,4,8,16,32,64,128\}\), together with the associated standard deviation.
As \(N\) increases, the approximation accuracy improves, reaching an average MSE of \(7.68\times 10^{-5}\) for case \(1\), \(2.92\times 10^{-5}\) for case \(3\), $1.9\times 10^{-4}$ for case $5$,  \(8.34 \times 10^{-5}\) for case $6$, $3.53\times 10^{-4}$ for case $7$ and $9.88 \times 10^{-4}$ for case $8$. Interestingly, the standard deviation also decreases overall with \(N\), suggesting that as \(N\) increases, the different runs converge toward very similar functions.
\begin{figure}[H]
    \centering
    \begin{minipage}[b]{0.49\linewidth}
     \includegraphics[width=\linewidth]{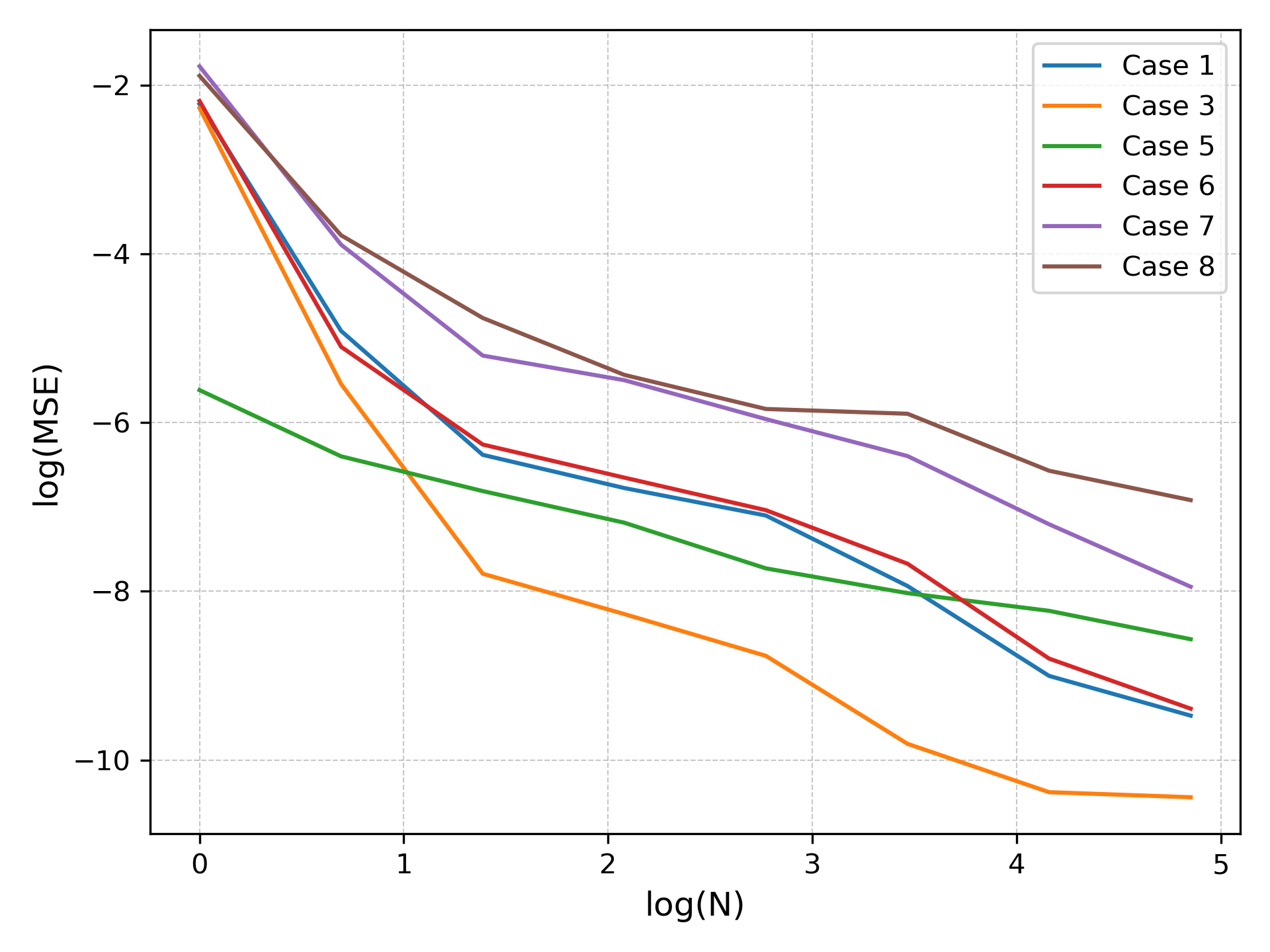}
    \caption*{MSE}   
    \end{minipage}
    \begin{minipage}[b]{0.49\linewidth}
     \includegraphics[width=\linewidth]{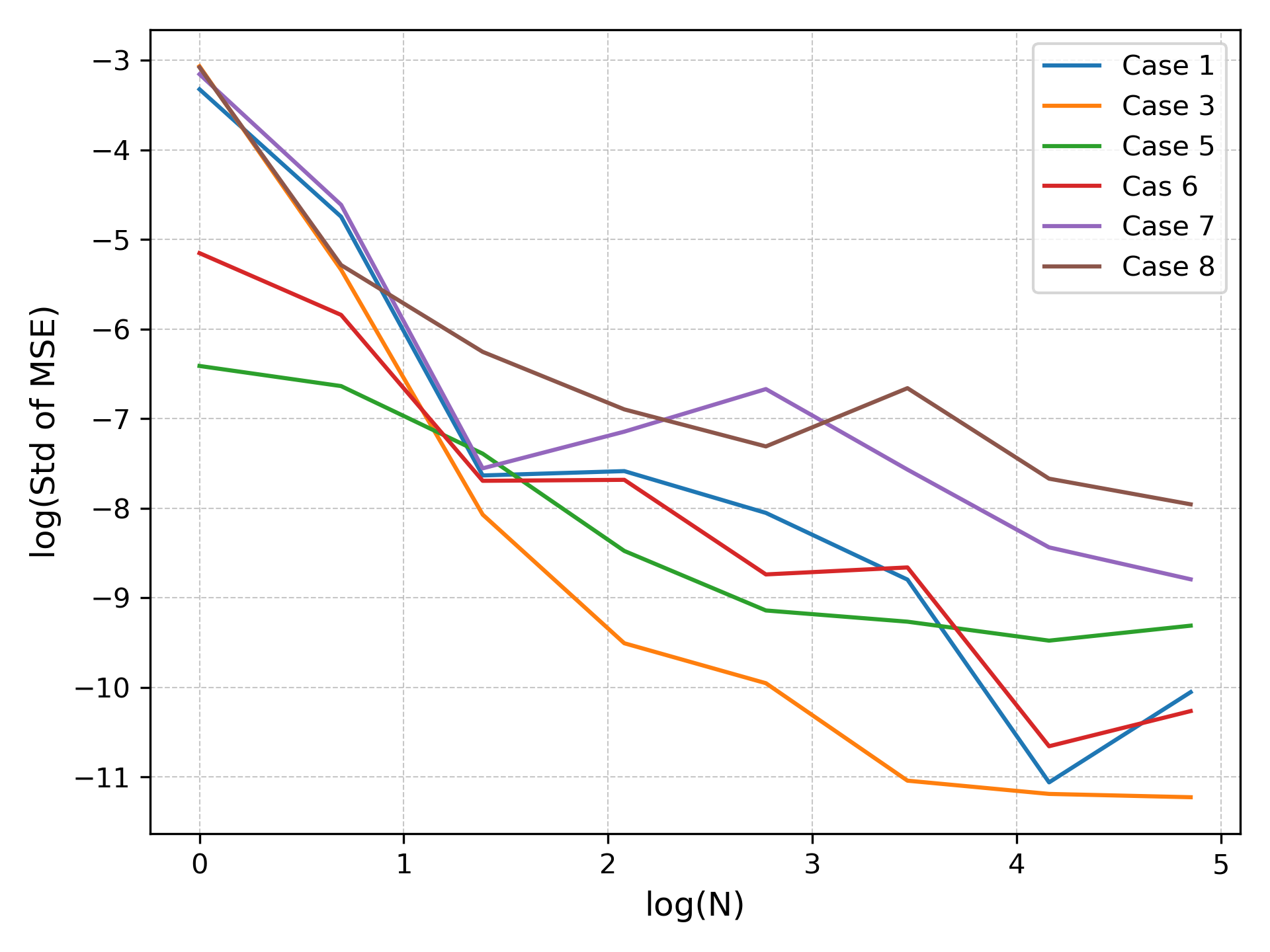}
    \caption*{Std of the MSE}   
    \end{minipage}
    \caption{
    Logarithm of the averaged MSE and of the MSE standard deviation as functions of \(\log(N)\) for the saddle class.}
    \label{fig:conVN}
\end{figure}
Figure \ref{fig:conVNGenNot} indeed shows that adding a bilinear term allow us to use a lower order $N$ for the same accuracy. However it seems that the benefit of the bilinear term appears to vanish as $N$ grows.
\begin{figure}[H]
    \centering
    \begin{minipage}[b]{0.32\linewidth}
     \includegraphics[width=\linewidth]{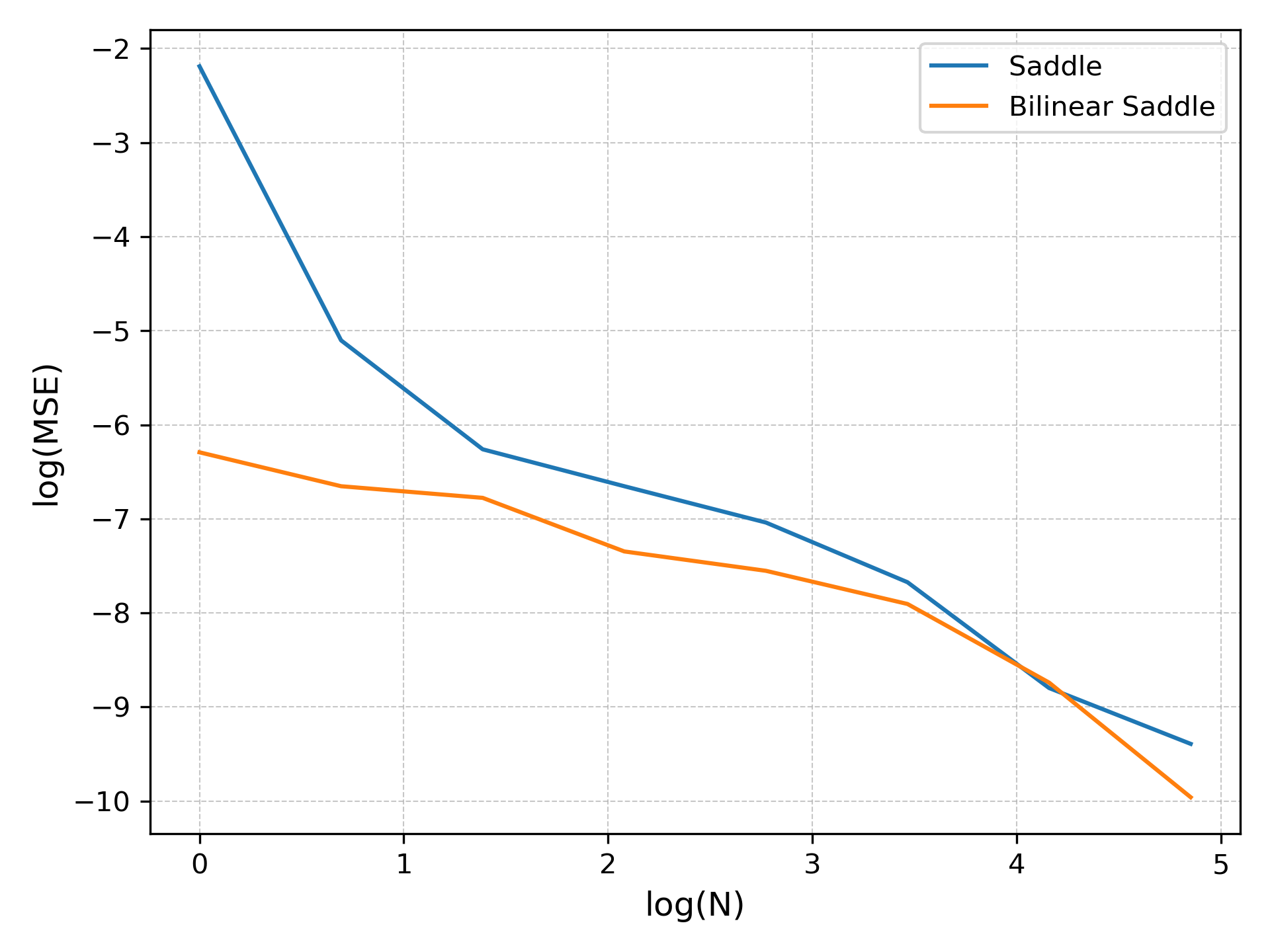}
    \caption*{Case 6}   
    \end{minipage}
    \begin{minipage}[b]{0.32\linewidth}
     \includegraphics[width=\linewidth]{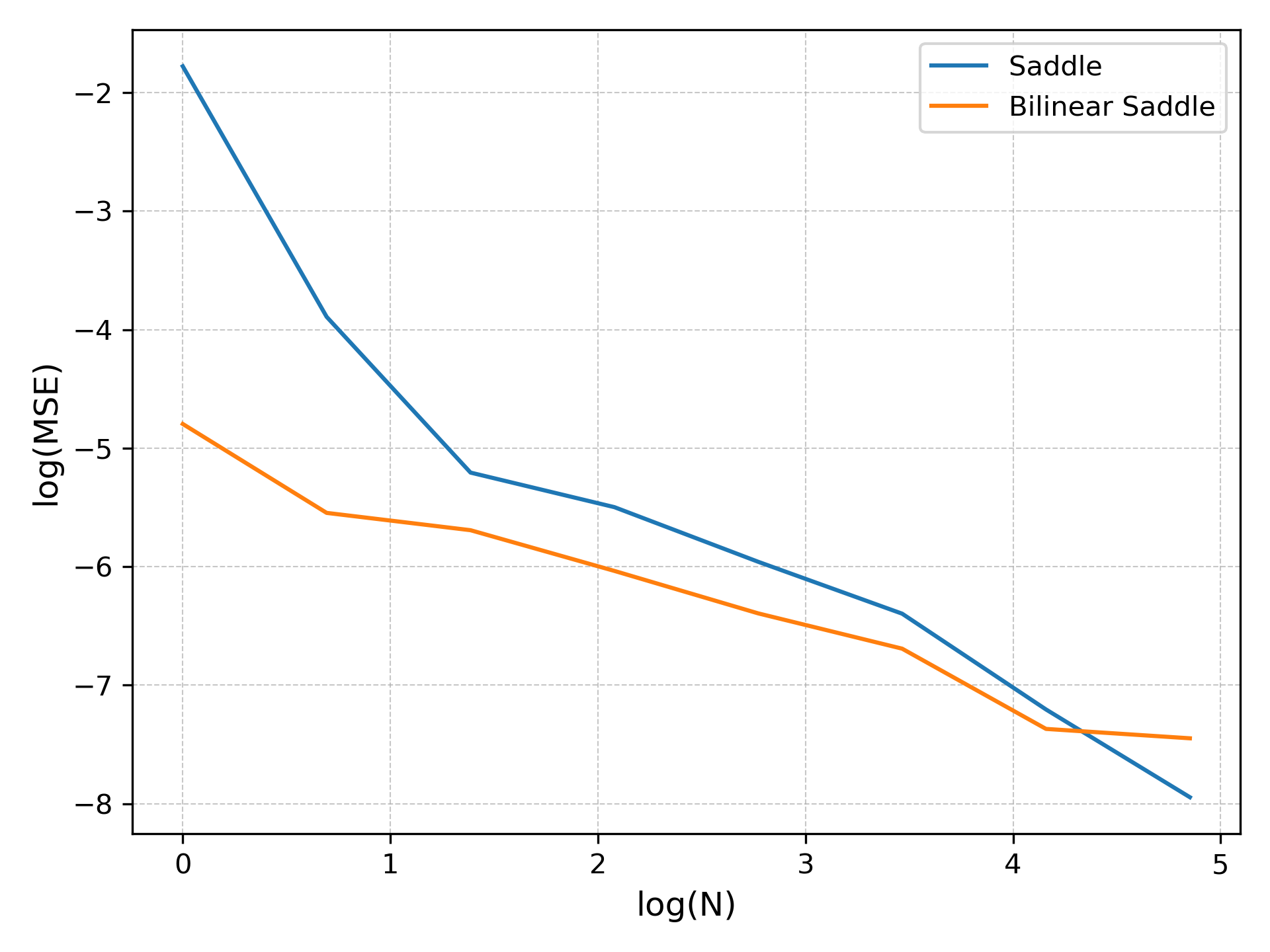}
    \caption*{Case 7}   
    \end{minipage}
   \begin{minipage}[b]{0.32\linewidth}
     \includegraphics[width=\linewidth]{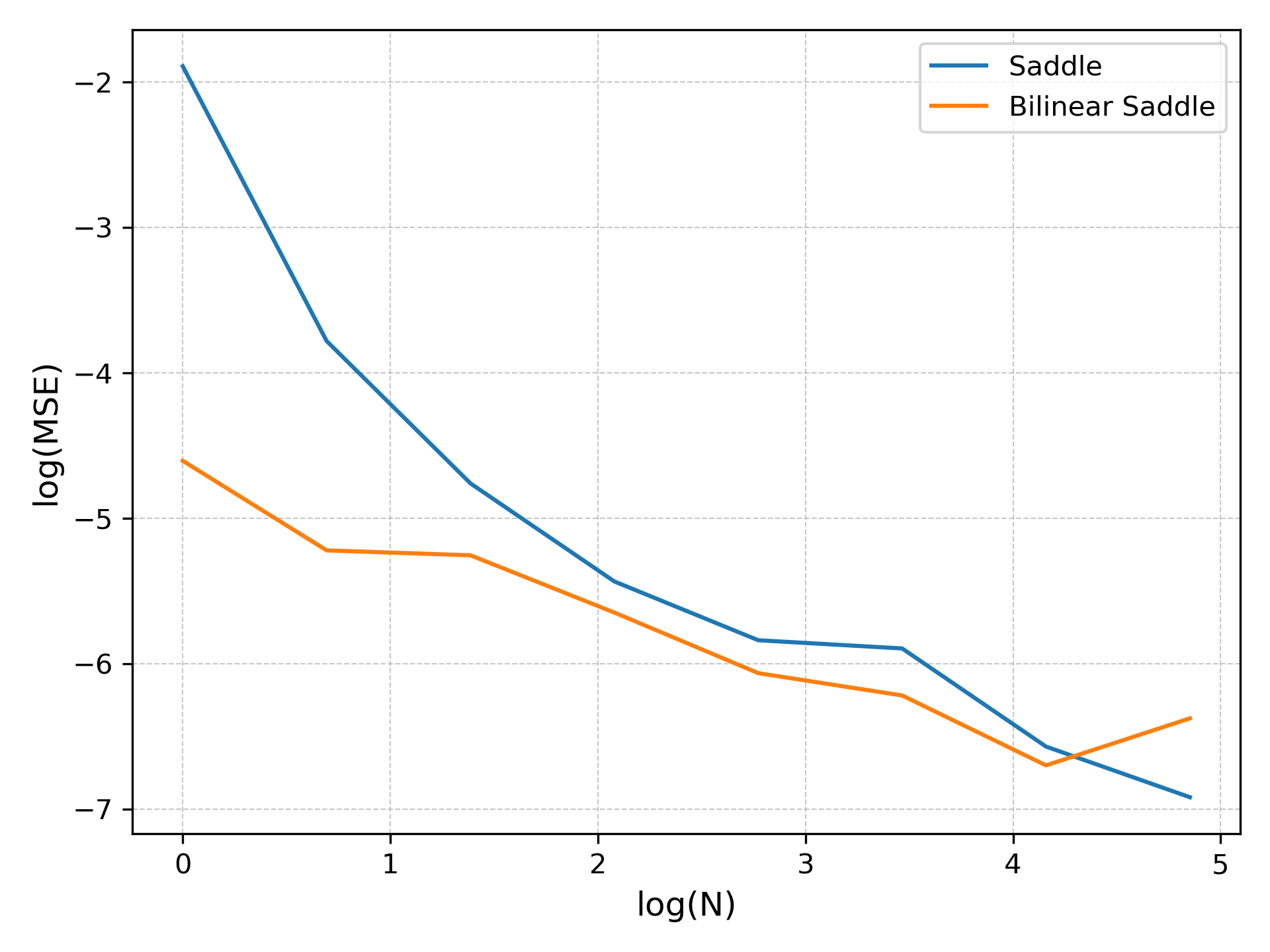}
    \caption*{Case 8}   
    \end{minipage}
    \caption{
    Logarithm of the averaged MSE comparing saddle and bilinear saddle approximation as a function of \(\log(N)\).}
    \label{fig:conVNGenNot}
\end{figure}
\section{Conclusion and perspectives}

The proposed framework suggests that preserving saddle geometry by construction can be achieved with surprisingly simple building blocks, reducing the design of convex-concave models to the choice of a suitable convex primitive and a controlled factorization scheme. This viewpoint shifts the focus from architectural complexity to structural fidelity, which is essential in applications where optimization guarantees are as important as approximation accuracy.

The main open question concerns the intrinsic expressive power of such factorizations in higher dimension. The numerical evidence indicates that increasing the separable rank improves stability and accuracy, but a theoretical characterization of this behavior remains out of reach. Understanding whether a finite-rank representation can capture the full cone of multivariate saddle functions—or quantifying the gap otherwise—would provide a decisive insight into the limits of structure-preserving learning.
Finally, beyond approximation, the relevance of saddle networks lies in their integration into downstream pipelines: robust optimization, adversarial learning, and control all rely on stable min--max formulations. Embedding geometric constraints directly into learned models opens the door to hybrid methods combining data-driven approximation with certified optimization guarantees, a direction that appears particularly promising for high-stakes decision systems.

\bibliographystyle{plain}
\bibliography{biblio}

@article{Goebel2008,
  author = {Goebel, R.},
  title = {Self-dual smoothing of convex and saddle functions},
  journal = {Journal of Convex Analysis},
  volume = {15},
  number = {1},
  pages = {179--190},
  year = {2008}
}

@inproceedings{AmosICNNPMLR,
  author = {Amos, Brandon and Xu, Lei and Kolter, J. Zico},
  title = {Input convex neural networks},
  booktitle = {Proceedings of the 34th International Conference on Machine Learning (ICML)},
  series = {Proceedings of Machine Learning Research},
  volume = {70},
  pages = {146--155},
  year = {2017}
}

@inproceedings{ChenShiZhang2019,
  author = {Chen, Y. and Shi, Y. and Zhang, B.},
  title = {Optimal control via neural networks: A convex approach},
  booktitle = {International Conference on Learning Representations (ICLR)},
  year = {2019}
}

@article{WarinGroupMaxArxiv,
  author = {Warin, Xavier},
  title = {The GroupMax neural network approximation of convex functions},
  journal = {IEEE Transactions on Neural Networks and Learning Systems},
  year = {2023},
  doi = {10.1109/TNNLS.2023.3240183}
}

@article{ICKANArxiv,
  author = {Deschatre, Thomas and Warin, Xavier},
  title = {Input convex Kolmogorov--Arnold networks},
  journal = {arXiv preprint arXiv:2505.21208},
  year = {2025}
}

@article{COMONet2025,
  author = {Kim, H. and Bu, D. and Lee, J.-S.},
  title = {A novel architecture for integrating shape constraints in neural networks (COMONet)},
  journal = {OpenReview},
  year = {2025}
}

@article{SchieleLuxenbergBoydDSP,
  author = {Schiele, Philipp and Luxenberg, Eric and Boyd, Stephen},
  title = {Disciplined saddle programming},
  journal = {arXiv preprint arXiv:2301.13427},
  year = {2023}
}

@article{JuditskyNemirovski2022,
  author = {Juditsky, Anatoli and Nemirovski, Arkadi},
  title = {On well-structured convex--concave saddle point problems and variational inequalities with monotone operators},
  journal = {Optimization Methods and Software},
  volume = {37},
  number = {5},
  pages = {1567--1602},
  year = {2022},
  doi = {10.1080/10556788.2021.1928121}
}

@inproceedings{YangZhangKiyavashHe2020,
  author = {Yang, J. and Zhang, S. and Kiyavash, N. and He, N.},
  title = {A catalyst framework for minimax optimization},
  booktitle = {Advances in Neural Information Processing Systems},
  volume = {33},
  pages = {5667--5678},
  year = {2020}
}

@inproceedings{LinJinJordan2020,
  author = {Lin, Tianyi and Jin, Chi and Jordan, Michael I.},
  title = {Near-optimal algorithms for minimax optimization},
  booktitle = {Proceedings of the 33rd Conference on Learning Theory (COLT)},
  series = {Proceedings of Machine Learning Research},
  volume = {125},
  pages = {2738--2779},
  year = {2020}
}

@article{BotCsetnekSedlmayer2022,
  author = {Bo{\c{t}}, Radu Ioan and Csetnek, Ern{\H{o}} Robert and Sedlmayer, Markus},
  title = {An accelerated minimax algorithm for convex-concave saddle point problems with nonsmooth coupling function},
  journal = {Computational Optimization and Applications},
  volume = {86},
  number = {3},
  pages = {925--966},
  year = {2023},
  doi = {10.1007/s10589-022-00378-8}
}

@article{RiveraWangXu2020,
  author = {Rivera Cardoso, Adrian and Wang, He and Xu, Huan},
  title = {The online saddle point problem and online convex optimization with knapsacks},
  journal = {Mathematics of Operations Research},
  volume = {50},
  number = {1},
  pages = {1--39},
  year = {2025},
  doi = {10.1287/moor.2018.0330}
}

@article{KaweckiSmears2022,
  author = {Kawecki, Ellya L. and Smears, Iain},
  title = {Unified analysis of discontinuous Galerkin and C0-interior penalty finite element methods for Hamilton--Jacobi--Bellman and Isaacs equations},
  journal = {ESAIM: Mathematical Modelling and Numerical Analysis},
  volume = {55},
  number = {2},
  pages = {449--478},
  year = {2021},
  doi = {10.1051/m2an/2020081}
}

@article{EvansSouganidis1984,
  author = {Evans, Lawrence C. and Souganidis, Panagiotis E.},
  title = {Differential games and representation formulas for solutions of Hamilton--Jacobi--Isaacs equations},
  journal = {Indiana University Mathematics Journal},
  volume = {33},
  number = {5},
  pages = {773--797},
  year = {1984}
}

\appendix

\section{Appendix: Proof of the one-dimensional theorem}

We provide a complete and corrected proof of Theorem~\ref{thm:main}. 
The argument relies on a discrete decomposition adapted to convex-concave
structures. The key point is to use a basis of concave ``tent'' functions
rather than simple hinge functions, which avoids spurious mixed terms.

\subsection{Discrete decomposition lemma}
\begin{lemma}[Discrete saddle decomposition in dimension one]
\label{lem:discrete-monge-correct}
Let \(m \ge 1\) and consider a uniform grid with a step $1/m$ giving a grid of points  $(\frac{i}{m},\frac{j}{m})$ for $ i,j \in \{0,\dots,m\}$. 
Let \(A \in \mathbb{R}^{(m+1)\times(m+1)}\) satisfy:
\[
\Delta_x^2 A_{ij} \ge 0,
\qquad
\Delta_y^2 A_{ij} \le 0,
\]
where $\Delta_x ^2 A_{i,j} = A_{i+1,j} + A_{i-1,j}- 2 A_{i,j}$,
$\Delta_y^2 A_{i,j} =A_{i,j+1} + A_{i,j-1}- 2 A_{i,j}$,
for all admissible indices, and assume the mixed condition
\[
\Delta_x^2(-\Delta_y^2 A)_{rs} \ge 0
\quad \text{for all } 1 \le r,s \le m-1.
\]

Then there exist functions:
\begin{itemize}
\item \(E_0, E_m : \{0,\dots,m\} \to \mathbb{R}\), discretely convex,
\item \(B_s : \{0,\dots,m\} \to \mathbb{R}_+\), discretely convex for each \(s=1,\dots,m-1\),
\end{itemize}
such that for all \(i,j\),
\[
A_{ij}
=
E_0(i)L_0(j)
+
E_m(i)L_m(j)
+
\sum_{s=1}^{m-1} B_s(i)\, K_s(j),
\]
where
\[
L_0(j) = 1-\frac{j}{m},
\qquad
L_m(j) = \frac{j}{m},
\qquad
K_s(j) = \min(j,s) - \frac{js}{m}.
\]
\end{lemma}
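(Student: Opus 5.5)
For each fixed row \(i\), I will expand the column profile \(j\mapsto A_{ij}\) in the basis \(\{L_0,L_m,K_1,\dots,K_{m-1}\}\) of functions on \(\{0,\dots,m\}\). Then I will read off the coefficients as functions of \(i\) and check their convexity and sign.

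\textbf{Step 1: the tent basis.} On \(\{0,\dots,m\}\):
\begin{itemize}
\item \(L_0\) and \(L_m\) are the affine functions with \(L_0(0)=1\), \(L_0(m)=0\), \(L_m(0)=0\), \(L_m(m)=1\).
\item Each \(K_s\) is a concave tent vanishing at \(j=0\) and \(j=m\). It is affine on \(\{0,\dots,s\}\) and on \(\{s,\dots,m\}\), so its discrete second difference is \(\Delta_y^2K_s(j)=-\delta_{js}\) for \(1\le j\le m-1\).
\end{itemize}
A function \(\phi\) on \(\{0,\dots,m\}\) is therefore determined by its endpoint values together with its interior second differences. This gives the discrete Green representation
\[
\phi(j)=\phi(0)L_0(j)+\phi(m)L_m(j)-\sum_{s=1}^{m-1}\Delta_y^2\phi(s)\,K_s(j).
\]
To verify it, note that both sides have the same values at \(j=0,m\) and the same second differences at every interior \(j\). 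Their difference is then discretely affine and vanishes at both ends, hence is identically zero.

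\textbf{Step 2: apply it row by row.} Applying the representation to \(\phi=A_{i,\cdot}\) for each fixed \(i\) gives
\[
E_0(i)=A_{i0},\qquad E_m(i)=A_{im},\qquad B_s(i)=-\Delta_y^2A_{is}.
\]
Checking the required properties:
\begin{itemize}
\item \(E_0\) and \(E_m\) are discretely convex because \(\Delta_x^2A_{ij}\ge0\) holds for all admissible indices, in particular for the boundary columns \(j=0\) and \(j=m\).
\item Each \(B_s\) is nonnegative because \(\Delta_y^2A\le0\).
\item Each \(B_s\) is discretely convex because \(\Delta_x^2B_s(r)=\Delta_x^2(-\Delta_y^2A)_{rs}\ge0\) for \(1\le r\le m-1\), which is exactly the mixed condition.
\end{itemize}

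\textbf{Main obstacle.} The substantive content is the choice of basis. With hinge functions \((j-s)^+\) the coefficients would be first differences and would not decouple; the tents \(K_s\), which vanish at both ends, are what make the coefficients exactly \(-\Delta_y^2A_{\cdot s}\). Two indexing details need care:
\begin{itemize}
\item The mixed condition only controls \(\Delta_x^2B_s(r)\) for \(1\le r\le m-1\). This is still enough, since discrete convexity on \(\{0,\dots,m\}\) only ever involves interior second differences.
\item Nonnegativity of \(B_s\) must also hold at the endpoints \(i=0\) and \(i=m\). This requires \(\Delta_y^2A_{is}\le0\) at those rows, which the hypothesis "for all admissible indices" provides, since admissibility restricts only the differenced index.
\end{itemize}
In the sequel, each \(K_s\) is nonnegative and concave and each \(L_0,L_m\) is affine. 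The boundary terms can then be rearranged into a positive-times-concave form plus a concave marginal, by a shift argument like the one in the bilinear remark.
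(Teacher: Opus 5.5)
Your proof is correct and follows the paper's argument essentially step for step. You expand each row \(j\mapsto A_{ij}\) in the basis \(L_0,L_m,K_s\) via the discrete Green identity, and read off \(E_0(i)=A_{i0}\), \(E_m(i)=A_{im}\), \(B_s(i)=-\Delta_y^2A_{is}\), with sign and convexity coming from concavity in \(y\), convexity in \(x\) on the boundary columns, and the mixed condition; your check that the difference is affine and vanishes at both endpoints is just an explicit version of the paper's appeal to uniqueness of the Dirichlet inverse of \(\Delta_y^2\), and your explicit handling of the index ranges (rows \(i=0,m\) for nonnegativity, interior \(r\) for convexity) is a precision the paper leaves implicit.
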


\begin{proof}

\textbf{Step 1: discrete reconstruction in the \(y\)-direction.}

Fix \(i\) and define \(u_j := A_{ij}\).
Since \(u\) is discretely concave,
\[
\Delta_y^2 u_s \le 0.
\]
Define
\[
c_s := -\Delta_y^2 u_s \ge 0.
\]

We claim that for all \(j\),
\[
u_j
=
\left(1-\frac{j}{m}\right)u_0
+
\frac{j}{m}u_m
+
\sum_{s=1}^{m-1} c_s K_s(j).
\]

Indeed:
\begin{itemize}
\item \(K_s(0)=K_s(m)=0\), so the formula matches the boundary values \(u_0,u_m\);
\item the functions \(L_0,L_m\) are affine in \(j\), hence \(\Delta_y^2 L_0 = \Delta_y^2 L_m = 0\);
\item a direct computation shows
\[
\Delta_y^2 K_s(j)
=
\begin{cases}
-1, & j=s,\\
0, & j\neq s,
\end{cases}
\]
i.e. \(\Delta_y^2 K_s(j) = -\mathbf{1}_{j=s}\).
\end{itemize}

Therefore,
\[
\Delta_y^2 u_j
=
\sum_{s=1}^{m-1} c_s \Delta_y^2 K_s(j)
=
- c_j,
\]
which matches the definition of \(c_j\).
Since the operator \(\Delta_y^2\) with Dirichlet boundary conditions admits a unique inverse on sequences, the representation is uniquely determined by its second differences and boundary values.

Applying this to \(u_j = A_{ij}\) yields
\[
A_{ij}
=
\left(1-\frac{j}{m}\right) A_{i0}
+
\frac{j}{m} A_{im}
+
\sum_{s=1}^{m-1} B_{is} K_s(j),
\]
with
\[
B_{is} := -\Delta_y^2 A_{is} \ge 0.
\]

\medskip
\textbf{Step 2: convexity in the \(x\)-direction.}

By definition,
\[
B_{is} = -\Delta_y^2 A_{is}.
\]
The mixed condition implies
\[
\Delta_x^2 B_{rs}
=
\Delta_x^2(-\Delta_y^2 A)_{rs} \ge 0,
\]
so for each fixed \(s\), the sequence \(i \mapsto B_{is}\) is discretely convex.

Moreover, since \(i \mapsto A_{ij}\) is discretely convex for every \(j\),
the boundary sequences
\[
i \mapsto A_{i0},
\qquad
i \mapsto A_{im}
\]
are discretely convex.

\medskip
\textbf{Step 3: conclusion.}

Define
\[
E_0(i) := A_{i0}, \qquad
E_m(i) := A_{im}, \qquad
B_s(i) := B_{is}.
\]
Then the decomposition follows.
\end{proof}

\subsection{Proof of Theorem~\ref{thm:main}}

\begin{proof}

\textbf{Step 1: reduction.}

By an affine change of variables, we reduce to \(X=Y=[0,1]\).

\medskip
\textbf{Step 2: discretization.}

Let
\[
x_i = \frac{i}{m}, \qquad y_j = \frac{j}{m}, \qquad 0 \le i,j \le m,
\]
and define
\[
A_{ij} := f(x_i,y_j).
\]

Since \(x \mapsto f(x,y)\) is convex and \(y \mapsto f(x,y)\) is concave,
we have
\[
\Delta_x^2 A_{ij} \ge 0, \qquad \Delta_y^2 A_{ij} \le 0,
\]
where $\Delta_x ^2 A_{i,j} = A_{i+1,j} + A_{i-1,j}- 2 A_{i,j}$,
$\Delta_y^2 A_{i,j} =A_{i,j+1} + A_{i,j-1}- 2 A_{i,j}$.

To obtain the mixed discrete condition directly from the distributional
Monge condition, let h=1/m and define the tent functions
\[
\psi_r(x)=(h-|x-x_r|)_+,\qquad
\eta_s(y)=(h-|y-y_s|)_+.
\]
Then in distribution:
\[
\psi_r''=\delta_{x_{r-1}}-2\delta_{x_r}+\delta_{x_{r+1}},
\qquad
\eta_s''=\delta_{y_{s-1}}-2\delta_{y_s}+\delta_{y_{s+1}}.
\]
Hence
\[
\left\langle \partial_{xx}(-\partial_{yy}f),\psi_r\eta_s\right\rangle
=
\left\langle f,-\psi_r''\eta_s''\right\rangle
=
\Delta_x^2(-\Delta_y^2 A)_{rs}.
\]
Since \(\psi_r\eta_s\ge 0\) and
\(\partial_{xx}(-\partial_{yy}f)\ge 0\) in the sense of distributions,
we get
\[
\Delta_x^2(-\Delta_y^2 A)_{rs}\ge 0.
\]
\medskip
\textbf{Step 3: discrete decomposition.}

Applying Lemma~\ref{lem:discrete-monge-correct}, we obtain
\[
A_{ij}
=
E_0(i)L_0(j)
+
E_m(i)L_m(j)
+
\sum_{s=1}^{m-1} B_s(i)\, K_s(j),
\]
where:
\begin{itemize}
\item \(E_0,E_m\) are discretely convex,
\item \(B_s \ge 0\) and discretely convex,
\item \(L_0,L_m,K_s\) are concave.
\end{itemize}

\medskip
\textbf{Step 4: lifting to continuous functions.}

Define \(y_s = s/m\). Introduce the continuous concave functions
\[
a_0(y) = 1-y, \qquad a_m(y) = y,
\]
\[
a_s^{(m)}(y) = m\bigl(\min(y,y_s)-y y_s\bigr).
\]

For each \(k\), define \(e_k\) as the piecewise linear interpolation
of the discrete sequence:
\[
e_0(x_i)=E_0(i), \quad
e_m(x_i)=E_m(i), \quad
e_s(x_i)=B_s(i).
\]

Piecewise linear interpolation preserves convexity, hence
each \(e_k\) is convex on \([0,1]\).

Define
\[
f_m(x,y) := \sum_k e_k(x)\, a_k(y).
\]

The representation given by the lemma reconstructs exactly each discrete
sequence in the $y$-direction, and the interpolation in $x$ preserves
nodal values, this defines a continuous function which interpolates the discrete data
in the sense that \(f_m(x_i,y_j)=A_{ij}\).

\medskip
\textbf{Step 5: convergence.}

With the above scaling, \(a_s^{(m)}(y_j)=K_s(j)\). Hence
\[
f_m(x_i,y_j)=A_{ij}=f(x_i,y_j)
\]
for all grid points.

Moreover, since the functions \(e_k\) are piecewise affine in \(x\) and the
functions \(a_k^{(m)}\) are piecewise affine in \(y\), the function \(f_m\)
coincides on each grid cell with the bilinear interpolant of the values
\[
f(x_i,y_j),\quad f(x_{i+1},y_j),\quad
f(x_i,y_{j+1}),\quad f(x_{i+1},y_{j+1}).
\]

Let \((x,y)\in[x_i,x_{i+1}]\times[y_j,y_{j+1}]\). Then there exist nonnegative
weights \(\lambda_{\alpha\beta}\), \(\alpha,\beta\in\{0,1\}\), summing to one,
such that
\[
f_m(x,y)
=
\sum_{\alpha,\beta\in\{0,1\}}
\lambda_{\alpha\beta}
f(x_{i+\alpha},y_{j+\beta}).
\]
Therefore, by uniform continuity of \(f\),
\[
|f_m(x,y)-f(x,y)|
\le
\omega_f\left(\frac{\sqrt 2}{m}\right).
\]
Hence
\[
\|f_m-f\|_\infty
\le
\omega_f\left(\frac{\sqrt 2}{m}\right)
\to 0.
\]

\medskip
\textbf{Step 6: representation in the saddle class.}

We now show that the approximant \(f_m\) has the admissible saddle-network
structure.

Recall that
\[
f_m(x,y)
=
E_0(x)L_0(y)
+
E_m(x)L_m(y)
+
\sum_{s=1}^{m-1} B_s(x)a_s^{(m)}(y),
\]
where
\[
L_0(y)=1-y,\qquad L_m(y)=y,
\]
and
\[
a_s^{(m)}(y)
=
m\bigl(\min(y,y_s)-yy_s\bigr).
\]

For \(s=1,\dots,m-1\), the functions \(B_s\) are convex and nonnegative,
while \(a_s^{(m)}\) are concave and nonnegative. Hence each product
\[
B_s(x)a_s^{(m)}(y)
\]
is directly of the admissible form
\[
e_i^{cv,+}(x)a_i^{cc,+}(y).
\]

It remains to treat the two boundary terms. Choose constants
\(C_0,C_m\ge 0\) such that
\[
E_0(x)+C_0\ge 0,
\qquad
E_m(x)+C_m\ge 0
\quad\text{for all }x\in[0,1].
\]
Then
\[
E_0(x)L_0(y)
=
\bigl(E_0(x)+C_0\bigr)L_0(y)
-
C_0L_0(y),
\]
and
\[
E_m(x)L_m(y)
=
\bigl(E_m(x)+C_m\bigr)L_m(y)
-
C_mL_m(y).
\]

Since \(E_0+C_0\) and \(E_m+C_m\) are convex and nonnegative, and since
\(L_0,L_m\) are affine nonnegative functions, hence both concave and convex,
the two products
\[
\bigl(E_0(x)+C_0\bigr)L_0(y),
\qquad
\bigl(E_m(x)+C_m\bigr)L_m(y)
\]
are of type
\[
e_i^{cv,+}(x)a_i^{cc,+}(y).
\]

The remaining terms depend only on \(y\):
\[
-C_0L_0(y)-C_mL_m(y).
\]
Since \(L_0\) and \(L_m\) are affine, the function
\[
G(y):=-C_0L_0(y)-C_mL_m(y)
\]
is affine, hence concave on \([0,1]\). Therefore these two terms can be
absorbed into the concave marginal \(G\).

Consequently,
\[
f_m(x,y)
=
\sum_i e_i^{cv,+}(x)a_i^{cc,+}(y)
+
G(y).
\]
Thus \(f_m\) belongs to the saddle class appearing in the statement of the
theorem.

Since \(\|f-f_m\|_\infty\to 0\), choosing \(m\) sufficiently large gives the
desired approximation.

\end{proof}
\end{document}